
\documentclass{amsart}
\usepackage{amsmath, amsthm, amssymb}
\usepackage{verbatim}
\usepackage{tikz}

\usepackage{url}
\usepackage{hyperref}

\renewcommand{\d}{\partial}

\newcommand{\vepsilon}{\varepsilon}
\newcommand{\vphi}{\varphi}

\newcommand{\cH}{\mathcal{H}}

\newcommand{\supp}{\mbox{supp }}

\newcommand{\bN}{\mathbb{N}}

\newcommand{\bR}{\mathbb{R}}
\newcommand{\bC}{\mathbb{C}}

\newcommand{\vpi}{\varpi}

\newtheorem{thm}{Theorem}
\newtheorem{prop}[thm]{Proposition}
\newtheorem{lem}[thm]{Lemma}

\theoremstyle{definition}
\newtheorem{defn}[thm]{Definition}

\numberwithin{thm}{section}
\numberwithin{equation}{section}

\renewcommand{\[}{\begin{equation}}
\renewcommand{\]}{\end{equation}}

\newcommand{\wed}{\wedge}


\title[continuous subsolution problem and modulus of continuity]{A remark on the continuous subsolution problem for the complex Monge-Amp\`ere equation}

\author[S. Ko\l odziej, N.-C. Nguyen]{S\l awomir Ko\l odziej and Ngoc Cuong Nguyen}

\address{Faculty of Mathematics and Computer Science, Jagiellonian University 30-348 Krak\'ow, \L ojasiewicza 6, Poland}
\email{Slawomir.Kolodziej@im.uj.edu.pl}

\address{Faculty of Mathematics and Computer Science, Jagiellonian University 30-348 Krak\'ow, \L ojasiewicza 6, Poland; and Department of Mathematics, Center for Geometry and its Applications, Pohang University of Science and Technology, 37673, The Republic of Korea}
\email{Nguyen.Ngoc.Cuong@im.uj.edu.pl, \quad cuongnn@postech.ac.kr}

\subjclass[2010]{53C55, 35J96, 32U40}
\keywords{Dirichlet problem, complex Monge-Amp\`ere equation, weak solutions, subsolution problem}

\begin{document}

\maketitle

\begin{center}
\em On the occasion of  L\^e V\u{a}n Thi\^em's centenary \rm
\end{center}

\bigskip
\bigskip
\bigskip

\begin{abstract}
We prove that if the modulus of continuity of a plurisubharmonic subsolution satisfies a Dini type condition then the Dirichlet problem for the complex Monge-Amp\`ere equation has the continuous solution. The modulus of continuity of the solution is also given if the right hand side is locally  dominated by  capacity.
\end{abstract}

\bigskip

\section{Introduction}

In this note we consider the  Dirichlet problem for the complex Monge-Amp\`ere equation in a strictly pseudoconvex domain $\Omega\subset \mathbb{C} ^n.$
Let $\psi$ be a continuous function on the boundary of $\Omega.$
  We look for the solution to the equation:
\[\label{eq:dirichlet-prob}
\begin{aligned}
&	u\in PSH(\Omega) \cap C^0(\bar\Omega), \\
&	(dd^c u)^n = d\mu, \\
&	u = \psi \quad \mbox{on } \d\Omega.
\end{aligned}\]

It was shown 
in \cite{ko96} that for the measures satisfying certain bound in terms of the Bedford-Taylor capacity \cite{BT2}
 the  Dirichlet problem has a (unique) solution. The precise statement is as follows.

Let $h : \mathbb R_+ \rightarrow (0, \infty ) $ be an increasing function such that
\[\notag
\label{eq:admissible}
	\int_1^\infty \frac{1}{x [h(x) ]^{\frac{1}{n}} }  \, dx < +\infty.
\]
We call such a function 
{\em admissible}. If $h$ is admissible, then so is $A  h$ for any number $A >0$.
Define
\[\notag
	F_h(x) = \frac{x}{h(x^{-\frac{1}{n}})}.
\]
Suppose that for such a function $F_h (x)$ a Borel measure $\mu $ satisfies

\begin{equation}
\label{eq:magrowth}
	\int_E d\mu \leq F_h( cap(E)),
\end{equation}
for any Borel set $E \subset \Omega$. Then, by  \cite{ko96} the Dirichlet problem  \eqref{eq:dirichlet-prob} has a solution.

This statement is useful as long as we can verify the condition \eqref{eq:magrowth}.
In particular if $\mu$ has density with respect to the Lebesgue measure in $L^p$, $p>1$
then this bound is satisfied \cite{ko96}. By the recent results in 
 \cite{Ng17, Ng18} if $\mu$ is bounded by the Monge-Amp\`ere measure of a H\"older continuous plurisubharmonic function $\vphi$:
\[\notag
\label{eq:sub}
	\mu \leq (dd^c\vphi)^n  \quad \mbox{in }\Omega ,
\]
then  \eqref{eq:magrowth} holds for a specific $h$, and consequently,  the Dirichlet problem \eqref{eq:dirichlet-prob} is solvable with H\"older continuous solution.
Our result in this paper says that we can considerably weaken the assumption on $\varphi$ and still get a continuous solution of the equation.

Let $\vpi (t):= \vpi(t;\vphi,\bar\Omega)$   denote the  modulus of continuity of $\vphi$ on $\bar\Omega$,  i.e, 
\[\notag
	\vpi(t)= \sup \left\{|\vphi(z) -\vphi(w)| : z,w \in \bar\Omega, \quad |z-w| \leq t\right\}.
\]
Thus
$
	|\vphi(z) - \vphi(w)| \leq \vpi (|z-w|)
$ for every  $ z,w\in \bar\Omega$.
Let us state the first result.

\begin{thm} 
\label{thm:sub}
Let $\vphi \in PSH(\Omega) \cap C^{0}(\bar\Omega), $ $\vphi =0$ on $\partial\Omega$. Assume that its modulus of continuity satisfies the Dini type condition
\[\label{eq:modulus-ass}
	\int_0^{1} \frac{[\vpi(t)]^\frac{1}{n}}{t |\log t|} dt <+\infty.
\]
If the measure $\mu$ satisfies $\mu \leq (dd^c\vphi)^n$ in $\Omega$, then the Dirichlet problem \eqref{eq:dirichlet-prob} admits a unique solution.

\end{thm}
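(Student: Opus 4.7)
The plan is to deduce the theorem from Kolodziej's criterion \eqref{eq:magrowth} by producing an admissible $h$ for which $\mu$ satisfies that bound. Since $\mu \leq (dd^c \vphi)^n$, it suffices to establish a capacity--Monge-Amp\`ere estimate of the form $\int_E (dd^c\vphi)^n \leq F_h(\mathrm{cap}(E))$ for every Borel $E\subset\Omega$; existence then follows from \cite{ko96}, and uniqueness from the comparison principle applied to continuous plurisubharmonic solutions. The correct $h$ is dictated by the change of variable $x = t^{-n}$, which turns the admissibility integral into (up to a positive constant) $\int_0^1 dt/(t\,[h(t^{-n})]^{1/n})$; matching this against \eqref{eq:modulus-ass} leads to
\[\notag
F_h(c) \;\sim\; \frac{c\,\vpi(c^{1/n})}{|\log c|^n},
\]
which is admissible precisely under the Dini hypothesis. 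So the heart of the matter is the inequality
\[\notag
\int_E (dd^c\vphi)^n \;\leq\; C\,\frac{\mathrm{cap}(E)\,\vpi(\mathrm{cap}(E)^{1/n})}{|\log \mathrm{cap}(E)|^n}.
\]

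To prove this I would first extend $\vphi$ to a neighborhood of $\bar\Omega$ as a continuous plurisubharmonic function (using the defining function of the strictly pseudoconvex $\Omega$), and then pass to the standard mollification $\vphi_\delta = \vphi \ast \rho_\delta$, smooth and plurisubharmonic on $\Omega_\delta := \{z : d(z,\partial\Omega) > \delta\}$ with $\|\vphi_\delta - \vphi\|_\infty \leq \vpi(\delta)$. A Cegrell--Kolodziej type comparison argument, of the kind used in the H\"older setting of \cite{Ng17, Ng18}, should then produce an estimate of the form
\[\notag
\int_{\{u < -s\}} (dd^c\vphi)^n \;\leq\; \frac{C\,\vpi(\delta)}{s^n}\,\mathrm{cap}(\{u < -s\}) + (\text{lower order in }\delta)
\]
for every $u \in PSH(\Omega)$ with $-1 \le u \le 0$ and every $s \in (0,1)$. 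Specializing $u$ to a suitable normalization of the relative extremal function of $E$ and optimizing $\delta$ against $\mathrm{cap}(E)$ closes the estimate; the factor $|\log|^n$ is generated precisely by this optimization.

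The main obstacle is exactly this optimization step. In the H\"older case $\vpi(\delta) = \delta^\alpha$ a clean power-law balance between $\vpi(\delta)$ and $\delta$ works and produces the expected capacity bound by direct calculation; for a general modulus one has to track how $\vpi$ enters every constant at each step, and verify that the losses from the comparison estimate plus the $\delta$-optimization leave exactly \eqref{eq:modulus-ass} as the sharp integrability hypothesis on $\vpi^{1/n}$. An iterative refinement of the comparison step, in the spirit of the Kolodziej--Dinew iteration, replacing the single-shot comparison inequality by a recursive one, is likely the cleanest way to carry out this plan for an arbitrary modulus of continuity.
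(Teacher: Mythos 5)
Your overall strategy --- reducing to Ko\l odziej's criterion \eqref{eq:magrowth} by proving a volume--capacity estimate for $(dd^c\vphi)^n$ via mollification of $\vphi$, a comparison argument, and an optimization of the mollification parameter against $cap(E)$ --- is the same as the paper's. But the quantitative heart of the argument is missing, and the target inequality you reverse-engineer is not the one the method produces. The unnamed ``lower order in $\delta$'' term is in fact $C\delta^{-2n}\exp\left(-\alpha/[cap(E)]^{1/n}\right)cap(E)$: it comes from $dd^c\vphi_\delta\leq C\delta^{-2}\,dd^c|z|^2$ combined with the exponential volume--capacity inequality of \cite{ACKPZ}, and it \emph{blows up} as $\delta\to0$. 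Balancing it against the main term $\vpi(\delta)\,cap(E)$ forces $\delta=\exp(-\tau/[cap(E)]^{1/n})$ with $2n\tau<\alpha$, which is exactly Lemma~\ref{lem:vol-cap-key}: the estimate one actually obtains is
\[\notag
\mu(E)\ \leq\ C\left\{\vpi\left( \exp\left(\frac{-\tau}{[cap(E)]^{1/n}}\right)\right)+\exp\left(\frac{2n\tau-\alpha}{[cap(E)]^{1/n}}\right)\right\} cap(E),
\]
with $\vpi$ evaluated at a quantity \emph{exponentially} small in $1/[cap(E)]^{1/n}$, and with no logarithmic factor. Your assertion that ``the factor $|\log|^n$ is generated precisely by this optimization'' is unsupported and does not reflect what the optimization yields: the $|\log t|$ in \eqref{eq:modulus-ass} arises instead from the substitution $t=e^{-\tau x}$ when one checks admissibility of $h(x)=1/(C\vpi(e^{-\tau x}))$, i.e.\ it lives in the admissibility integral, not in the capacity estimate. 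Had you taken the natural-looking choice $\delta\sim cap(E)^{1/n}$ suggested by your target, the main term would be $\vpi(cap(E)^{1/n})\,cap(E)$, whose admissibility requires $\int_0^1\vpi(t)^{1/n}t^{-1}\,dt<\infty$ --- strictly stronger than the stated Dini hypothesis --- so the theorem as stated would not follow.

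Two further gaps. First, the mollification $\vphi_\delta$ only lives on $\Omega_\delta$, and a continuous psh function on $\Omega$ does not in general extend to a psh function on a neighborhood of $\bar\Omega$; an extension by $\max(\vphi,A\rho)$ with $\rho$ a defining function replaces $\vphi$ near $\partial\Omega$ (since $\vpi(t)/t$ may be unbounded) and so destroys the subsolution property there. The paper avoids this by first proving the capacity estimate for $\mu$ with compact support. Second, the passage from compactly supported $\mu$ to general $\mu$ is then needed: the paper restricts $\mu$ to the exhaustion $E_j=\{\vphi\leq-1/j\}$, solves for each $\mu_j$, and uses $u_j+\max(\vphi,-1/j)\leq u\leq u_j$ to get uniform convergence and hence continuity of $u$. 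This step is absent from your proposal.
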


Let us mention  in  this context that  it is still an open problem if a continuous subsolution $\vphi$ implies the
solvability of \eqref{eq:dirichlet-prob}.

The modulus of continuity of solution to the Dirichlet problem~\eqref{eq:dirichlet-prob} was obtained in \cite{BT76} for 
$\mu = fdV_{2n}$ with $f(x)$ being continuous on $\bar\Omega$. We also wish to study this problem for the measures which satisfy the inequality \eqref{eq:magrowth}. For simplicity we restrict ourselves to measures belonging to $\cH(\alpha,\Omega)$. In other words, we take the function $h(x) = C x^{n\alpha}$  for positive constants $C,\alpha>0$ in the inequality \eqref{eq:magrowth}.

 We introduce the following notion, which generalizes the one in  \cite{ko94}.
Consider a continuous increasing function $F_0:[0,\infty) \to [0,\infty)$ with $F(0)=0$. 

\begin{defn}
The measure $\mu$ is called  uniformly locally dominated by capacity with respect to $F_0$ if for every cube $I(z,r)=:I  \subset B_I:= B(z, 2r)  \subset \subset \Omega$ and for every set $E\subset I$,
\[\label{eq:u-vol-cap}
	\mu(E) \leq \mu(I) F_0\left(cap (E, B_I) \right).
\]
\end{defn}

According to \cite{ACKPZ} the Lebesgue measure $dV_{2n}$ satisfies this property with $F_0 = C_\alpha \exp (-\alpha/ x^{-1/n})$ for every $0< \alpha < 2n$. The case $F_0(x)= C x$ was considered in \cite{ko94}.  We refer the reader to \cite{BJZ} for more examples of measures satisfying this property.
Here is our second result.

\begin{thm} 
\label{thm:modulus}
Assume  $\mu \in \cH(\alpha,\Omega)$ with compact support and satisfying the condition \eqref{eq:u-vol-cap} for some $F_0$. Then, the modulus of continuity of the solution $u$ of the Dirichlet problem~\eqref{eq:dirichlet-prob} satisfies for $0< \delta < R_0$ and $2R_0 = \mbox{\rm dist} (\supp\mu, \d\Omega)>0$,
\[\notag
	\vpi(\delta;u,\Omega) \leq  \vpi(\delta;\psi,\d\Omega) + C \left[ \left(\log \frac{R_0}{\delta}\right)^{-\frac{1}{2}} + F_0 \left(\frac{C_0}{[\log (R_0/\delta)]^\frac{1}{2}}\right)\right]^{\alpha_1},
\]
where  the constants $C, \alpha_1$ depend only on $\alpha, \mu, \Omega$.
\end{thm}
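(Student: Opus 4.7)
The plan is to compare the solution $u$ with its sup-convolution at scale $\delta$,
\[\notag
u^\delta(z):=\sup_{|\zeta|\le\delta}u(z+\zeta),\quad z\in\Omega_\delta:=\{d(\cdot,\d\Omega)>\delta\},
\]
which is continuous and plurisubharmonic on $\Omega_\delta$, and then to estimate $\|u^\delta-u\|_\infty$ by capacity methods. After a $\max$-gluing of $u^\delta$ with $u$ inside the collar $\Omega\setminus\Omega_\delta$, one obtains a psh competitor $\tilde u$ on $\Omega$ matching $u$ on $\d\Omega$; the definition of the modulus of continuity then gives
\[\notag
\vpi(\delta;u,\Omega)\le \vpi(\delta;\psi,\d\Omega)+2\|\tilde u-u\|_{L^\infty(\Omega)}+o(1),
\]
where the last term collects the small barrier error introduced by the gluing. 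This reduces the theorem to an $L^\infty$ stability bound for $\tilde u-u$.

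The boundary contribution is handled first. By strict pseudoconvexity and the hypothesis $\mathrm{dist}(\supp\mu,\d\Omega)=2R_0>0$, both $u$ and $\tilde u$ are maximal on the collar $\Omega\setminus\Omega_{R_0}$, so standard barriers built from a strictly plurisubharmonic defining function of $\Omega$ propagate $\vpi(\delta;\psi,\d\Omega)$ inward while the $O(\sqrt\delta)$ barrier error is absorbed into the logarithmic main term appearing in the statement. For the interior step I would apply a Chern-Levine-Nirenberg-type capacity inequality together with the comparison principle: for each $s>0$,
\[\notag
s^n\,cap(\{\tilde u>u+s\})\le C\bigl(\mu(\{\tilde u>u\})+|\mu-\mu_\delta|(\Omega)\bigr),
\]
where $\mu_\delta$ denotes the translate of $\mu$ produced by the sup-convolution at scale $\delta$. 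The hypothesis $\mu\in\cH(\alpha,\Omega)$ gives a power-type bound of the form $\mu(E)\le C\,cap(E)^{1+\alpha}$, while \eqref{eq:u-vol-cap} upgrades $\mu(\{\tilde u>u\})$ by a factor $F_0(cap)$ after covering $\supp\mu$ by finitely many cubes. Iterating these two inequalities in the manner of \cite{ko94,ko96} yields a self-improving estimate of the form $\|\tilde u-u\|_\infty\le C[F_0(cap)+cap^{\alpha'}]^{\alpha_1}$ at the relevant capacity scale.

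The remaining task is to identify that scale. Since $\tilde u-u$ is supported inside a $\delta$-thickening of $\supp\mu$ embedded in an $R_0$-neighbourhood, the relevant relative capacity is controlled by the classical extremal-function estimate $cap(B(z,\delta),B(z,R_0))\lesssim(\log(R_0/\delta))^{-n}$. Inserting this bound into the iteration and splitting scales between the small-capacity and large-capacity contributions (essentially a Cauchy-Schwarz on the two terms above) produces the claimed rate $(\log(R_0/\delta))^{-1/2}+F_0(C_0/\sqrt{\log(R_0/\delta)})$ raised to the power $\alpha_1=\alpha_1(\alpha)$. The main obstacle will be the bookkeeping in this last step: carefully matching the exponents coming from Chern-Levine-Nirenberg, from the growth class $\cH(\alpha,\Omega)$, and from the local capacity hypothesis, so that the argument of $F_0$ comes out \emph{exactly} as $C_0/\sqrt{\log(R_0/\delta)}$ while the boundary barrier error stays strictly below the main logarithmic term.
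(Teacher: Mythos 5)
Your overall frame (sup-convolution $u_\delta$, max-gluing, reduction to an $L^\infty$ stability bound via the $\cH(\alpha,\Omega)$ hypothesis, boundary barriers) matches the paper's, but the interior step --- which is the heart of the theorem --- has two genuine gaps. First, your capacity inequality introduces the total variation $|\mu-\mu_\delta|(\Omega)$ of the difference between $\mu$ and its translate. For a general measure in $\cH(\alpha,\Omega)$ there is no reason for this quantity to be small: translates of a singular measure can be mutually singular, so $|\mu-\mu_\delta|(\Omega)$ may equal $2\mu(\Omega)$ for every $\delta>0$. Second, you locate the source of the logarithm in the bound $cap(B(z,\delta),B(z,R_0))\lesssim(\log(R_0/\delta))^{-n}$, on the grounds that ``$\tilde u-u$ is supported inside a $\delta$-thickening of $\supp\mu$.'' That is not the relevant set: $u_\delta\geq u$ everywhere, and $\{u_\delta>u\}$ is typically a large open set whose capacity does not decay as $\delta\to 0$. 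The quantity one actually needs to control is $\mu(\{u<u_\delta-\vepsilon\})$ after the elementary splitting $\int(u_\delta-u)\,d\mu\leq\vepsilon\,\mu(\Omega)+\|u\|_\infty\,\mu(\{u<u_\delta-\vepsilon\})$, and on each cube $I_s$ of side comparable to $\delta$ the set $\{u<u_\delta-\vepsilon\}\cap I_s$ sits inside $E(\vepsilon,u,B(z_s,2\delta))$, the set where $u$ comes within a factor $(1-\vepsilon)$ of its supremum over $B(z_s,2\delta)$.

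The missing key lemma is the estimate $cap\bigl(E(\vepsilon,v,B(z,r)),B(z,2r)\bigr)\leq C_0/(\vepsilon\log(R/r))$ for a normalized psh function $-1\leq v\leq 0$ defined on the much larger ball $B(z,R)$ (Ko{\l}odziej's lemma, proved in the paper via logarithmic convexity of spherical maxima and the Alexander--Taylor inequality). This is where $\log(R_0/\delta)$ enters --- it measures how unlikely a globally bounded psh function is to nearly attain its local maximum on a small ball --- and it is precisely this capacity that gets fed into $F_0$ through the local domination hypothesis \eqref{eq:u-vol-cap}, cube by cube. Without it, your ``iteration'' and ``Cauchy--Schwarz on the two terms'' cannot produce the argument $C_0/(\vepsilon\log(R_0/\delta))$ inside $F_0$; the final choice $\vepsilon=(\log(R_0/\delta))^{-1/2}$ then simply balances the two terms $\vepsilon$ and $F_0(C_0/(\vepsilon\log(R_0/\delta)))$. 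I would redo the interior step along these lines: drop the translate comparison $|\mu-\mu_\delta|$, introduce the $\vepsilon$-level-set splitting explicitly, and prove (or quote) the capacity estimate for the near-maximum sets $E(\vepsilon,u,B(z_s,2\delta))$.
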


\bigskip

{\bf Acknowledgement.} The first author was partially supported by NCN grant  
2017/27/B/ST1/01145. The second author was supported by  the NRF Grant 2011-0030044 (SRC-GAIA) of The Republic of Korea. He also would like to thank Kang-Tae Kim for encouragement and support.

\section{Proof of Theorem~\ref{thm:sub}}

In this section we will prove Theorem~\ref{thm:sub}. We need the following   lemma. The proof of this lemma is based on a similar idea as the one in \cite[Lemma~3.1]{KN18a} where the complex Hessian equation is considered. The difference is that we have much stronger volume-capacity inequality for the Monge-Amp\`ere equation.

\begin{lem} 
\label{lem:vol-cap-key}
Assume the measure $\mu$ is compactly supported. Fix $0< \alpha < 2n$ and $ \tau = \alpha/(2n+1)$. There exists a uniform constant $C$ such that for every compact set $K \subset \Omega$,
\[\label{eq:vol-cap-key}
	\mu(K) \leq C \left\{ \vpi\left( \exp \left(\frac{-\tau}{2[cap(K)]^\frac{1}{n}}\right) \right)  + \exp\left(\frac{2n\tau -\alpha}{2[cap(K)]^\frac{1}{n}} \right)\right\} \cdot cap(K)
\]
where $cap(K):= cap(K,\Omega).$
\end{lem}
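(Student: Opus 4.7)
The plan is to regularize $\vphi$ by standard convolution and combine this with an integration-by-parts argument against the relative extremal function $u_K^*$ of $K$, closed by the volume--capacity inequality of \cite{ACKPZ}. Since $\supp\mu\Subset\Omega$, after extending $\vphi$ to a neighbourhood of $\bar\Omega$, set $\vphi_\delta:=\vphi*\chi_\delta$ for $\delta>0$ small; the mean-value inequality for PSH functions gives $0\leq\vphi_\delta-\vphi\leq\vpi(\delta)$, and differentiating under the integral sign together with $\int\d_i\bar\d_j\chi_\delta\,dy=0$ yields $|\d_i\bar\d_j\vphi_\delta(z)|\leq C\vpi(\delta)/\delta^2$, hence the pointwise estimate $(dd^c\vphi_\delta)^n\leq C_n(\vpi(\delta)/\delta^2)^n\,dV$.

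Let $u:=u_K^*$, so $-1\leq u\leq 0$, $u=-1$ on $K$ q.e., $u|_{\d\Omega}=0$, and $\int_\Omega(dd^cu)^n=cap(K)$. Using the telescoping identity $(dd^c\vphi)^n-(dd^c\vphi_\delta)^n=dd^c(\vphi-\vphi_\delta)\wed T$ with $T=\sum_{k=0}^{n-1}(dd^c\vphi)^k\wed(dd^c\vphi_\delta)^{n-1-k}$, pairing with $(-u)^n$ (which is $\geq 1$ on $K$ q.e.) and integrating by parts in the Bedford--Taylor sense gives $\int(-u)^n[(dd^c\vphi)^n-(dd^c\vphi_\delta)^n]=\int(\vphi-\vphi_\delta)\,dd^c[(-u)^n]\wed T$. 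Expanding $dd^c[(-u)^n]=n(n-1)(-u)^{n-2}du\wed d^cu-n(-u)^{n-1}dd^cu$ and using the Stokes identity $\int(-u)^{n-2}du\wed d^cu\wed T=(n-1)^{-1}\int(-u)^{n-1}dd^cu\wed T$ (valid because $T$ is closed and $u|_{\d\Omega}=0$) reduces everything to a multiple of $\vpi(\delta)\int(-u)^{n-1}dd^cu\wed T$. Iterating Bedford--Taylor IBP $n-1$ further times, each step moving one $dd^c\vphi$ or $dd^c\vphi_\delta$ from $T$ onto the weight and accumulating one additional $dd^cu$, gives $\int(-u)^{n-1}dd^cu\wed T\leq C\,cap(K)$ with $C$ depending on $n$ and $\|\vphi\|_\infty$, so
$\int_K(dd^c\vphi)^n\leq\int(-u)^n(dd^c\vphi_\delta)^n+C\vpi(\delta)\,cap(K).$

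The ACKPZ inequality applied to $\{u\leq -1/2\}$ (a set whose capacity is comparable to $cap(K)$) yields $|\{u\leq -1/2\}|\leq C_\alpha\exp(-\alpha/cap(K)^{1/n})$. Combining with the pointwise bound on $(dd^c\vphi_\delta)^n$, and treating $\{u>-1/2\}$ via $(-u)^n\leq 2^{-n}$ and an analogous (absorbed) estimate, one obtains $\int(-u)^n(dd^c\vphi_\delta)^n\leq C(\vpi(\delta)/\delta^2)^n\exp(-\alpha/cap(K)^{1/n})$. Setting $\delta_0:=\exp(-\tau/(2cap(K)^{1/n}))$ with $\tau:=\alpha/(2n+1)$, the relation $(2n+1)\tau=\alpha$ gives $\delta_0^{-2n}\exp(-\alpha/cap(K)^{1/n})=\exp((2n\tau-2\alpha)/(2cap(K)^{1/n}))$, and the residual factor $\exp(-\alpha/(2cap(K)^{1/n}))/cap(K)$ is uniformly bounded on $(0,cap(\Omega)]$ (exponential decay absorbs any polynomial blow-up as $cap(K)\to 0$), hence absorbed into the universal constant to recover the claimed bound.

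The main obstacle is producing the full capacity factor $cap(K)$ rather than the weaker $cap(K)^{1/n}$ that arises from a direct application of the mixed Monge--Amp\`ere inequality to $\int dd^cu\wed T$. This forces the choice of $(-u)^n$ (not $-u$) as the test weight and a careful iteration of Bedford--Taylor integration by parts $n$ times so that all $n$ copies of $dd^cu$ accumulate, matching the very definition $cap(K)=\int(dd^cu)^n$; the auxiliary terms from $dd^c[(-u)^n]$ are handled either by the exact $du\wed d^cu$-to-$dd^cu$ identity noted above or by dropping them via sign considerations. A secondary technical point is the extension of $\vphi$ past $\d\Omega$ so that all boundary terms in the successive integrations by parts vanish, which is available thanks to the compact support of $\mu$.
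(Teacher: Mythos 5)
There is a genuine gap, and it is located exactly at the step your whole strategy hinges on: the bound $\int_\Omega(-u)^n(dd^c\vphi_\delta)^n\leq C(\vpi(\delta)/\delta^2)^n\exp(-\alpha/[cap(K)]^{1/n})$. The weight $(-u)^n$, with $u=u_K^*$, equals $1$ on $K$ but decays only logarithmically away from $K$, so it does not localize the integral to a set of small volume. Concretely, take $\Omega=B(0,R)$, $\vphi(z)=|z|^2-R^2$ and $K=\bar B(0,r)$ with $r\to0$: then $(dd^c\vphi_\delta)^n=c_n\,dV$, $-u_K^*(z)=\min\bigl(1,\log(R/|z|)/\log(R/r)\bigr)$, and a direct computation gives $\int_\Omega(-u)^n\,dV\sim C\,cap(K)$, whereas your claimed bound (with the forced choice $\delta=\exp(-\tau/(2[cap(K)]^{1/n}))$ and $\vpi(\delta)\sim\delta$) tends to zero like $\exp((n\tau-2\alpha)/(2[cap(K)]^{1/n}))$, i.e.\ far faster than $cap(K)$. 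So the inequality is false, and no absorption can rescue it: on $\{u>-1/2\}$ you only have $(-u)^n\leq 2^{-n}$, a fixed constant on a region of volume comparable to $|\Omega|$, and $\int_{\{u>-1/2\}}(dd^c\vphi_\delta)^n$ is not a multiple of the quantity being estimated; splitting instead at a level $-\eta$ fails because controlling $\eta^n\int_\Omega(dd^c\vphi_\delta)^n$ by $o(1)\cdot cap(K)$ forces $\eta\lesssim[cap(K)]^{1/n}$, after which $cap(\{u\leq-\eta\})$ is bounded below and the volume--capacity bound of \cite{ACKPZ} degenerates to a constant. In fact the example above shows the loss already occurs at your very first inequality $\int_K(dd^c\vphi)^n\leq\int_\Omega(-u)^n(dd^c\vphi)^n$: the right-hand side is of order $cap(K)$, while the lemma demands a bound that is $o(1)\cdot cap(K)$. (By contrast, your first error term, $\int(-u)^n[(dd^c\vphi)^n-(dd^c\vphi_\delta)^n]\leq C\vpi(\delta)\,cap(K)$ via telescoping and Bedford--Taylor integration by parts, is plausible modulo boundary terms; that is not where the difficulty lies.)

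The missing idea is the localization that the paper obtains from the comparison principle. There one sets $\delta=\vpi(\vepsilon)$ and $K'=\{3\delta u_K+\vphi_\vepsilon<\vphi-2\delta\}$, so that $K\subset K'\subset\{u_K<-1/2\}$ and hence $cap(K')\leq 2^n cap(K)$; the comparison principle then gives $\mu(K)\leq\int_{K'}(dd^c(3\delta u_K+\vphi_\vepsilon))^n\leq 3\delta M^n cap(K')+\int_{K'}(dd^c\vphi_\vepsilon)^n$. The crucial point is that the $(dd^c\vphi_\vepsilon)^n$-term is now integrated over a set of capacity $\leq 2^n cap(K)$, hence of exponentially small volume by \cite{ACKPZ}, which is exactly what your global weight fails to deliver. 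Note also that the factor $\vpi(\vepsilon)$ enters there as the coefficient $3\delta$ multiplying $u_K$, not through a sharpened Hessian estimate: the crude bound $dd^c\vphi_\vepsilon\leq C\|\vphi\|_\infty\vepsilon^{-2}\,dd^c|z|^2$ suffices, so your refined estimate $|\d_i\overline{\d}_j\vphi_\delta|\leq C\vpi(\delta)/\delta^2$, while correct, is deployed in the wrong place and cannot compensate for the lack of localization.
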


\begin{proof} Fix a compact subset $K\subset\subset \Omega$. Without loss of generality we may assume that $K$ is regular (in the sense that its relative extremal function \cite{BT2}
is continuous) as $\mu$ is a Radon measure. Denote by $\vphi_\vepsilon$  the standard regularization of $\vphi$. We choose $\vepsilon>0$ so small that 
\[\notag
	\supp \mu  \subset \Omega'' \subset\subset \Omega' \subset  \Omega_\vepsilon  \subset \Omega,
\]
where $\Omega_\vepsilon = \{z\in \Omega: dist(z, \d\Omega) > \vepsilon\}$. Since for every $K \subset \Omega''$ we have
\[\notag
	cap(K, \Omega') \sim cap(K,\Omega)
\]
(up to a constant depending only on $\Omega, \Omega'$) in what follows we will write $cap (K)$ for either one of these capacities.
We have 
\[\notag
	0\leq \vphi_\vepsilon - \vphi \leq \vpi(\vepsilon) := \delta \quad
	\mbox{on } \Omega'.
\]
Let $u_K$ the relative extremal function for $K$ with respect to $\Omega'$. 
Consider the set $K' = \{ 3\delta u_K + \vphi_\vepsilon < \vphi - 2\delta\}$. Then, 
\[\label{eq:supp}
	K \subset K' \subset \left\{u_K < -\frac{1}{2} \right\} \subset \Omega'.
\]
Hence, by the comparison principle \cite{BT2},
\[\label{eq:compare}
	cap(K') \leq 2^n cap(K). 
\]
Note that 
\[\label{eq:bound-ep}
	dd^c \vphi_\vepsilon \leq \frac{C}{\vepsilon^2} \; dd^c |z|^2, \quad
	\|\vphi_\vepsilon + u_K\|_\infty=:M \leq \|\vphi\|_\infty +1.
\]
The comparison principle,  the bounds \eqref{eq:bound-ep} and the volume-capacity inequality from  \cite{ACKPZ} (in  the last inequality below) give us that 
\[\begin{aligned}
	\int_{K'} (dd^c \vphi )^n 
&\leq 	\int_{K'} (dd^c (3\delta u_K + \vphi_\vepsilon) )^n \\
&\leq		3\delta\int_{K'}   \left[dd^c (u_K + \vphi_\vepsilon)\right]^n  + \int_{K'} (dd^c \vphi_\vepsilon )^n \\
&\leq		3\delta   M^n cap(K') + C(\alpha) \vepsilon^{-2n} \exp\left(\frac{-\alpha}{[cap(K')]^\frac{1}{n}} \right) cap(K').
\end{aligned}\]
Choose $$\vepsilon = \exp\left(\frac{-\tau}{[cap(K')]^\frac{1}{n}} \right)$$ (we assume that $\vepsilon$ is so small that it satisfies \eqref{eq:supp}, otherwise the inequality \eqref{eq:vol-cap-key} holds true by increasing the constant) and plug in the formula for $\delta$ we get that
\[\notag
\begin{aligned}
	\mu(K) 
&\leq 	\int_{K'} (dd^c (\vphi) )^n \\
&\leq 	3 M^n \vpi\left( \exp \left(\frac{-\tau}{[cap(K')]^\frac{1}{n}}\right) \right) \cdot  cap(K')  \\
&\quad + C \exp\left(\frac{2n\tau -\alpha}{[cap(K')]^\frac{1}{n}} \right).
\end{aligned}\]
This combined with \eqref{eq:compare} gives the desired inequality.
\end{proof}

We are ready to finish the proof of the theorem. 
It follows from Lemma~\ref{lem:vol-cap-key} that a suitable function $h$ for the measure $\mu$ which satisfies  \eqref{eq:magrowth} is
\[\notag
	h (x)=  \frac{1}{C \vpi(\exp (-\tau x))}
\]
once we had 
\[\notag
	\int_1^\infty \frac{1}{x [h(x) ]^{\frac{1}{n}} }  \, dx < +\infty.
\]
By changing the variable $s= 1/x$, and then $t = e^{-\tau/s}$, this is equivalent to 
$$
	\int_0^{e^{-\tau}} \frac{\left[\vpi(t) \right]^\frac{1}{n}}{t |\log t|} dt <+\infty.
$$
The finiteness is guaranteed by  \eqref{eq:modulus-ass}.
Thus,  our assumption on the modulus of continuity $\vpi(t)$ implies that $h$ is admissible in the case of $\mu$ with compact support.
Then, by  \cite{ko96} the Dirichlet problem~\eqref{eq:dirichlet-prob} has a unique solution.

To deal with the general case consider the exhaustion of $\Omega$ by 
$$
E _j =\{ \vphi \leq  -1/j \}
$$
and define $\mu _j $ to be the restriction of $\mu$ to $E_j$. Denote by $u_j$ the solution
of  \eqref{eq:dirichlet-prob} with $\mu$ replaced by $\mu _j$. By the comparison principle
$$
u_j + \max (\vphi ,  -1/j ) \leq u \leq u_j ,
$$
and so the sequence $u_j$ tends to $u=\lim u_j$ uniformly which gives the continuity of $u$.
The proof is completed.

\section{the modulus of continuity of solutions} 

In this section we study the modulus of continuity of the solution of the Dirichlet problem with the right hand side in the class $\cH(\alpha,\Omega)$ (definition below)
under the additional condition that a given measure is locally  dominated by capacity.

Recall that a positive Borel measure $\mu$ belongs to  $\cH(\alpha,\Omega)$, $\alpha>0$, if there exists a uniform constant $C>0$ such that for every Borel set $E \subset \Omega$,
\[\notag
	\mu(E) \leq C \left[cap (E,\Omega) \right]^{1+ \alpha}.
\]
The following result \cite[Lemma~2]{ko94}  will be used in what follows.

\begin{lem}
\label{lem:ko94}
 Suppose $0< 3r< R$ and
$
	B(z,r) \subset B(z, R) \subset \subset \Omega.
$
Let  $v\in PSH(\Omega)$ be such that $-1 \leq v \leq 0$. Denote 
\[\notag
	E(\vepsilon, v, B(z,r)) := \{z \in B(z, r) : (1-\vepsilon)v \leq \sup_{B(z,r)} v\},
\]
where $\vepsilon \in (0,1)$. Then, there exists $C_0$ depending only on $n$ such that
\[\notag
	cap(E, B(z,2r)) \leq \frac{C_0}{\vepsilon \log (R/r)}.
\]
\end{lem}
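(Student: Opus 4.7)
Write $s := \sup_{B(z,r)} v \in [-1,0]$. First I reduce to the case $s < 0$: if $s = 0$ then the defining condition becomes $(1-\varepsilon) v \leq 0$, which is automatic since $v \leq 0$, and hence $E = B(z,r)$; the stated inequality then reduces to a uniform upper bound on $\mathrm{cap}(B(z,r), B(z,2r))$, and since $\varepsilon \log(R/r) \geq \varepsilon \log 3$ is bounded below, the estimate holds by enlarging $C_0$. So henceforth $s < 0$, and from $(1-\varepsilon) v(x) \leq s$ on $E$ we get $v(x) \leq s/(1-\varepsilon)$ on $E$, i.e.\ $-v \geq |s|/(1-\varepsilon)$ on $E$.

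Next I set up two competing plurisubharmonic functions on $B(z,R)$: the rescaled function
\[
    h(x) := \frac{(1-\varepsilon)\, v(x)}{|s|},
\]
which satisfies $h \leq 0$ on $B(z,R)$ and $h \leq -1$ on $E$ (directly from the defining inequality of $E$); and the logarithmic extremal function
\[
    \phi(x) := \max\!\left( \frac{\log(|x-z|/R)}{\log(R/r)},\ -1 \right),
\]
which is psh, takes values in $[-1,0]$, equals $-1$ on $\overline{B(z,r)}$, vanishes on $\partial B(z,R)$, and whose Monge--Amp\`ere mass is concentrated on $\partial B(z,r)$ with total mass of order $(\log(R/r))^{-n}$. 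The key test function is the maximum $W := \max(h,\phi)$ on the ball where the logarithmic barrier vanishes at the outer boundary; I will use $W$ restricted to $B(z,2r)$, after rescaling $\phi$ so that it vanishes on $\partial B(z,2r)$ instead of $\partial B(z,R)$, since we need to estimate the capacity relative to the smaller ball. Since $W \leq -1$ on $E$ and $W = 0$ on $\partial B(z,2r)$, it is a competitor in the definition of the relative extremal function of $E$ in $B(z,2r)$, so by the Bedford--Taylor comparison principle
\[
    \mathrm{cap}(E, B(z,2r)) \leq \int_{B(z,2r)} (dd^c W)^n.
\]

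To extract the decay $\frac{1}{\varepsilon \log(R/r)}$, I would decompose $(dd^c \max(h,\phi))^n$ using the Bedford--Taylor identity, so that on $\{h > \phi\}$ the mass reduces to $(dd^c h)^n = ((1-\varepsilon)/|s|)^n (dd^c v)^n$, bounded by Chern--Levine--Nirenberg, and on $\{h < \phi\}$ the mass reduces to $(dd^c \phi)^n$, bounded by the computed capacity $(\log(R/r))^{-n}$. Balancing the two pieces, i.e.\ choosing where in the annulus the coincidence $\{h = \phi\}$ sits (driven by the value $|s|$), the bound on each becomes comparable once one extracts the factor $1/|s|$ against $|s|$; the surplus $1/(1-\varepsilon)$ in the comparison of $v$ and $s/(1-\varepsilon)$ on $E$ converts to the crucial $1/\varepsilon$ factor by a sharper version of the pointwise inequality $\chi_E \leq (s-v)^+ (1-\varepsilon)/(|s|\varepsilon)$, which trades $|s|$ for $\varepsilon|s|$ in the denominator.

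The main obstacle is the clean extraction of $1/\varepsilon$ (rather than $1/(1-\varepsilon)$) in the Monge--Amp\`ere mass computation, which requires that the auxiliary psh comparison be done against the sharper barrier $(s-v)^+$ rather than against $v$ itself, together with a Stokes/integration-by-parts argument to move the derivatives from the extremal function of $E$ onto $\phi$ and pick up the single $\log(R/r)$ factor from the logarithmic profile on the annulus $B(z,R) \setminus B(z,r)$. A secondary subtlety is that capacities are monotone decreasing in the enveloping domain, so one must work directly in $B(z,2r)$, using a barrier whose scale is still governed by $R$ through the psh structure of $v$ on the larger ball.
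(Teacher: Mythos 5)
Your overall strategy---build an explicit competitor $W$ for the relative extremal function of $E$ in $B(z,2r)$ and bound $\mathrm{cap}(E,B(z,2r))\le\int(dd^cW)^n$ by the comparison principle---is sound in principle, but the quantitative heart of the proof is missing, and the two places where you flag ``obstacles'' are exactly where the argument breaks. First, the piece $\int_{\{h>\phi\}}(dd^ch)^n=\bigl((1-\vepsilon)/|s|\bigr)^n\int(dd^cv)^n$ is only controlled by Chern--Levine--Nirenberg up to a constant times $|s|^{-n}$, which blows up when $\sup_{B(z,r)}v$ is close to $0$ and carries no factor of $\vepsilon$ or of $\log(R/r)$; no ``balancing'' of the coincidence set is specified that would cancel this. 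Second, once you rescale the logarithmic barrier to vanish on $\d B(z,2r)$, its Monge--Amp\`ere mass is of order $(\log 2)^{-n}$, a constant: the large ball $B(z,R)$ has disappeared from the estimate, so the crucial single factor $1/\log(R/r)$ (note: first power, not $n$-th) cannot come from $(dd^c\phi)^n$. The pointwise inequality $\chi_E\le (s-v)^+(1-\vepsilon)/(|s|\vepsilon)$ is correct, but you never integrate it against a measure for which the resulting right-hand side is controlled, so the $1/\vepsilon$ is not actually extracted.

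The paper's proof runs through a different mechanism which you should compare with. (i) By the logarithmic convexity of $r\mapsto\sup_{|z-z_0|<r}v$ (Hadamard three circles), writing $a_0=\sup_{B(z,r)}v$ and $a=\sup_{B(z,2r)}v$, one gets a quantified drop $a\le a_0\bigl(1-c/\log(R/r)\bigr)$; this is where $R$ enters, even though the capacity lives in $B(z,2r)$. (ii) The normalized function $(v-a)/\bigl(a-a_0/(1-\vepsilon)\bigr)$ is a competitor for the relative extremal function $u_E$ of $E$ in $B(z,2r)$, and evaluating at a point $z_1\in\d B(z,r)$ where $v$ nearly attains $a_0$ gives $|u_E(z_1)|\le C/(\vepsilon\log(R/r))$ --- the factor $|a_0|$ cancels between numerator and denominator, which is precisely the cancellation your sketch lacks. (iii) The Alexander--Taylor inequality (Lemma~\ref{lem:AT}) converts this single pointwise value into the mass bound $\mathrm{cap}(E,B(z,2r))=\int(dd^cu_E)^n\le C_0\,|u_E(z_1)|\,\|u_E\|_\infty^{n-1}$, using $\|u_E\|_\infty\le1$. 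Step (iii) is the tool you would need in place of a direct estimate of $\int(dd^cW)^n$; without it, or an equivalent device, the proposal does not yield the stated bound.
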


\begin{proof} See Appendix.
\end{proof}

Let us proceed with the proof of Theorem~\ref{thm:modulus}.
Since $\mu \in \cH(\alpha,\Omega)$, according to \cite{ko96} we can solve the Dirichlet problem~\eqref{eq:dirichlet-prob} to obtain a  unique continuous solution $u$.
Define for $\delta>0$ small
\[\notag
	\Omega_\delta := \left\{z \in \Omega : dist(z, \d \Omega) > \delta\right\};
\]
and for $z\in \Omega_{\delta}$ we define
\[\notag
	u_\delta(z) := \sup_{|\zeta| \leq \delta} u(z+ \zeta).
\]
Thanks to the arguments in \cite[Lemma~2.11]{Ng17}  it is easy to see  that there exists $\delta_0>0$ such that
\[\label{eq:boundary-est-b}
	u_\delta(z) \leq u(z) +  \vpi(\delta;\psi,\d\Omega)
\]
for every $z \in \d\Omega_\delta$ and $0< \delta<\delta_0$.
Here we used the result of Bedford and Taylor \cite[Theorem~6.2]{BT76} (with minor modifications) to extend $\psi$ plurisubharmonically onto $\Omega$
so that  its modulus of continuity on $\bar\Omega$ is controlled by the one on the boundary. 
Therefore, for a suitable extension of $u_\delta$ to $\Omega$, using the stability estimate for measure in $\cH(\alpha,\Omega)$ as in \cite[Theorem~1.1]{GKZ08} (see also \cite[Proposition~2.10]{Ng17}) we get 

\begin{lem}
\label{lem:stability}
 There are uniform constants $C, \alpha_1$ depending only on $\Omega, \alpha, \mu$ such that
\[\notag	\sup_{\Omega_\delta} (u_\delta - u)  \leq \vpi(\delta; \psi, \d\Omega) + C \left(\int_{\Omega_\delta} (u_\delta -u) d\mu \right)^{\alpha_1}
\]
for every $0<\delta<\delta_0$.
\end{lem}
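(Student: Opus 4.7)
The plan is to reduce the estimate to the $L^1$-stability theorem for the Monge-Amp\`ere equation with right-hand side in $\cH(\alpha,\Omega)$, as in \cite[Theorem~1.1]{GKZ08} and \cite[Proposition~2.10]{Ng17}. That theorem compares two plurisubharmonic functions sharing boundary values; the work in the lemma is to convert $u_\delta$, which is only defined and plurisubharmonic on $\Omega_\delta$ and whose boundary values on $\d\Omega_\delta$ differ from those of $u$ by at most $M:=\vpi(\delta;\psi,\d\Omega)$ thanks to \eqref{eq:boundary-est-b}, into a global competitor with identical boundary data as $u$.

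Concretely, I would set
\[\notag
	\tilde u_\delta(z) := \begin{cases} \max\{u_\delta(z)-M,\, u(z)\} & z\in \Omega_\delta,\\ u(z) & z\in \Omega\setminus\Omega_\delta, \end{cases}
\]
and observe that by \eqref{eq:boundary-est-b} we have $u_\delta - M \leq u$ on $\d\Omega_\delta$. After a harmless perturbation (for instance replacing $M$ by $M+\eta$ and letting $\eta\to 0^+$ at the end), this forces the max to coincide with $u$ on a one-sided neighborhood of $\d\Omega_\delta$ inside $\Omega_\delta$, so $\tilde u_\delta$ agrees with the plurisubharmonic function $u$ near $\d\Omega_\delta$ on both sides. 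The two pieces therefore glue into a continuous plurisubharmonic function on $\Omega$ satisfying $\tilde u_\delta=\psi$ on $\d\Omega$ and $\tilde u_\delta\geq u$ throughout. Now applying the cited stability estimate to the pair $(\tilde u_\delta,u)$, which share boundary data and satisfy $(dd^c u)^n=\mu\in\cH(\alpha,\Omega)$, gives
\[\notag
	\sup_\Omega(\tilde u_\delta - u) \leq C\Bigl(\int_\Omega (\tilde u_\delta - u)\, d\mu\Bigr)^{\alpha_1}
\]
with $C,\alpha_1$ depending only on $\Omega,\alpha,\mu$. Because $0\leq \tilde u_\delta - u \leq (u_\delta-u)^+\leq u_\delta - u$ on $\Omega_\delta$ (using $u_\delta\geq u$ there) and $\tilde u_\delta-u$ vanishes outside $\Omega_\delta$, the right-hand side is dominated by $C\bigl(\int_{\Omega_\delta}(u_\delta-u)\,d\mu\bigr)^{\alpha_1}$. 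The pointwise inequality $u_\delta-u\leq(\tilde u_\delta-u)+M$ on $\Omega_\delta$ then yields the claimed bound after taking supremum.

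The only nontrivial point I expect is the justification that $\tilde u_\delta$ is globally plurisubharmonic across $\d\Omega_\delta$, for which \eqref{eq:boundary-est-b} is used in an essential way together with a standard max-gluing argument; everything else is a direct application of the known stability theorem for measures in $\cH(\alpha,\Omega)$ and does not require new input.
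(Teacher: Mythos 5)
Your proposal is correct and follows essentially the same route as the paper, which only sketches this step by invoking ``a suitable extension of $u_\delta$ to $\Omega$'' together with the stability estimate of \cite{GKZ08, Ng17}; your gluing $\tilde u_\delta=\max\{u_\delta-\vpi(\delta;\psi,\d\Omega),u\}$ is precisely that extension, and the subsequent comparison and integral bounds are the intended argument.
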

Thanks to this lemma we know that the right hand side  tends to zero as $\delta$ decreases to zero. We will use the property  "locally dominated by capacity" to obtain a quantitative bound via Lemma~\ref{lem:ko94}.

\begin{proof}[End of Proof of Theorem~\ref{thm:modulus}]
Let us denote the support of $\mu$ by $K$. Since $\|u\|_\infty$ is controlled by a contant $C = C(\alpha,\Omega, \mu)$, without loss of generality we may assume that 
\[\notag
	-1 \leq u \leq 0.
\]
Then for every $0<\vepsilon<1$
\[\label{eq:divide}
\begin{aligned}
	\int_{\Omega_\delta} (u_\delta -u) d\mu \leq \vepsilon \; \mu(\Omega) + \int_{\{u < u_\delta - \vepsilon\} \cap K} d\mu
\end{aligned}\]
We shall now estimate the second term on the right hand side.

Let us fix the notation that will be used later on. 
We may assume that $\Omega \subset\subset [0,1]^{2n}$. Let us write $z = (x^1, ...,x^{2n}) \in \bR^{2n}$ and denote the semi open cube centered at a point $z_0$ of diameter $2r$ by
\[\notag
	I(z_0,r):= \{z = (x^1, ..., x^{2n})\in \bC^n : -r \leq x^i - x_0^i<  r \; \forall i = 1,...,2n\}.
\]
Then, by the assumption $\mu$ satisfies for every cube $$I(z,r)=:I \subset B_I := B(z,2r) \subset\subset \Omega$$ and for every set $E \subset I$, 
\[ \label{eq:local-dominate}
	\mu (E) \leq  \mu (I(z,r)) F_0\left( cap (E, B_I) \right),
\]
where $F_0: [0,\infty] \to [0,\infty]$ is an increasing continuous function and $F_0 (0) =0$.

Consider the semi-open cube decomposition of $\Omega \subset\subset I_0:=[0,1)^{2n} \subset \bR^{2n}$ into $3^{2ns}$ congruent cubes of diameter $3^{-s} = 2\delta$, where $ s \in \bN$.  
Then 
\[ \label{eq:inclusion}
\{u <  u_\delta -\vepsilon\} \cap I_s  \subset \{z\in B_{I_s} : u< \sup_{B_{I_s}} u -\vepsilon\},
\]
where $I_s = I(z_s, \delta)$ and $B_{I_s} = B(z_s, 2\delta)$ for some $z_s\in I_0$.
Hence
\[\notag
	\int_{\{u <  u_\delta - \vepsilon\}} d\mu \leq  \sum_{I_s \cap K \neq \emptyset  }\int_{\{u< u_\delta -\vepsilon \} \cap I_s} d\mu.
\]
Using \eqref{eq:local-dominate}, \eqref{eq:inclusion}, and then applying Lemma~\ref{lem:ko94} for $r=2\delta$ and $R= 2R_0$, we have for $B_s : = B(z_s, 4\delta)$ corresponding to each cube $I_s$:
\[\label{eq:key-bound-u}
\begin{aligned}
	\int_{\{u< u_\delta - \vepsilon\} \cap I_s} d\mu 
&\leq 	 \mu(I_s) F_0(cap(E(\vepsilon, u, B_{I_s}), B_s))  \\
&\leq		 \mu(I_s) \; F_0 \left(\frac{C_0}{\vepsilon \log (R_0/\delta)}\right),	
\end{aligned}\]
where $2R_0 = \mbox{dist} (K, \d\Omega)$.
Therefore, combining the above inequalities, we get that
\[\notag
	\int_{\{u <  u_\delta - \vepsilon\}} d\mu \leq  \mu(\Omega) F_0 \left(\frac{C_0}{\vepsilon \log (R_0/\delta)}\right).
\]
We conclude from this  and Lemma~\ref{lem:stability} that
\[\notag
	\omega(\delta;u, \bar\Omega) \leq \sup_{\Omega_\delta} (u_\delta - u) \leq \vpi(\delta;\psi,\d\Omega) + C \left[\vepsilon + F_0 \left(\frac{C_0}{\vepsilon \log (R_0/\delta)}\right)\right]^{\alpha_1}.
\]
If we choose $\vepsilon = (\log R_0/\delta)^{-1/2}$ then Theorem~\ref{thm:modulus} follows. 
\end{proof}


\section{Appendix}

For the reader's convenience we give the details of the  proof of Lemma~\ref{lem:ko94}.
The following inequality is due to Alexander and Taylor \cite[Lemma~3.3]{AT84}.

\begin{lem} 
\label{lem:AT}
Let $B' = \{|z-z_0| <r \} \subset \subset B= \{|z-z_0| <R\}$ be two concentric balls centered at $z_0$ in $\bC^n$. Let $u \in PSH(B) \cap L^\infty(B)$ with $u<0$. There is a constant $C = C(n, \frac{R}{r})$ independent of $u$ such that
\[\notag
	\int_{B'} (dd^c u)^n \leq C |u(z_0)| \sup_{z\in B} |u(z)|^{n-1}.
\]
In particular, if $R/r = 3$ then the constant $C$ depends only on $n$.
\end{lem}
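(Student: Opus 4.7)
The plan is to prove Lemma~\ref{lem:AT} by reducing the $n$-fold Monge-Amp\`ere mass to a single Laplacian integral via iterated Chern-Levine-Nirenberg (CLN) estimates, and then bounding that Laplacian mass by $|u(z_0)|$ through the classical Riesz representation of subharmonic functions. First, set $M = \sup_B|u|$ and replace $u$ by $u/M$ so that $-1 \leq u \leq 0$; rescaling at the end restores the $M^{n-1}$ factor. Since both sides of the inequality are scale invariant in $R$, we may also assume $R=1$, and it suffices to prove $\int_{B'}(dd^c u)^n \leq C(n,R/r)|u(z_0)|$.

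Next, I would choose a nested chain of concentric balls $B' = B_0 \subset\subset B_1 \subset\subset \cdots \subset\subset B_{n-1} \subset\subset B$ with radii interpolating between $r$ and $R$, and apply CLN $n-1$ times, each step peeling off one factor $dd^c u$ at the cost of $\|u\|_\infty \leq 1$:
\[\notag
\int_{B_0}(dd^c u)^n \leq C_1 \int_{B_1}(dd^c u)^{n-1}\wedge \beta \leq \cdots \leq C_{n-1}\int_{B_{n-1}} dd^c u \wedge \beta^{n-1},
\]
where $\beta = dd^c|z|^2$ and each $C_j$ depends only on $n$ and $R/r$. Up to a dimensional constant, $dd^c u \wedge \beta^{n-1} = c(n)\Delta u\,dV$, so the remaining integral is the Riesz mass of $u$ on $B_{n-1}$.

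To bound this mass by $|u(z_0)|$, I would invoke the Riesz decomposition of the subharmonic function $u$ on $B$: one writes $u(z) = h(z) - \int_B G_B(z,w)\,d\mu(w)$, where $\mu = c(n)\Delta u$ is a non-negative Radon measure, $G_B(\cdot,\cdot) > 0$ is the classical Green function of $B$, and $h$ is the harmonic function on $B$ with boundary values $u|_{\partial B}$. Since $u \leq 0$ on $\partial B$, the maximum principle forces $h \leq 0$ on $B$, hence
\[\notag
|u(z_0)| = -u(z_0) \geq \int_B G_B(z_0,w)\,d\mu(w) \geq \left(\inf_{w \in B_{n-1}} G_B(z_0,w)\right)\mu(B_{n-1}).
\]
The infimum is a positive constant depending only on $n$ and $R/r$, so $\int_{B_{n-1}} dd^c u \wedge \beta^{n-1} \leq C'(n,R/r)|u(z_0)|$; combining with the iterated CLN bound and undoing the normalization proves the lemma, with $C$ depending only on $n$ when $R/r = 3$.

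The main technical point will be justifying the Riesz decomposition, and hence the above inequality, for an arbitrary bounded PSH $u$ rather than a smooth one. This is handled by the standard mollification $u_\vepsilon = u * \chi_\vepsilon$: the decomposition holds pointwise for the smooth function $u_\vepsilon$ on any slightly shrunken ball, and as $\vepsilon \searrow 0$ one has $u_\vepsilon(z_0) \searrow u(z_0)$ by the decreasing mean-value characterization of subharmonic functions while $\Delta u_\vepsilon$ converges weakly to $\Delta u$ as Radon measures, so the resulting inequality passes to the limit.
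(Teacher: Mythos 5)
Your argument is correct, and it reaches the estimate by a route that differs from the paper's in both halves, though the second halves are close cousins. For the reduction from $(dd^cu)^n$ to the Laplacian mass, the paper does not iterate the Chern--Levine--Nirenberg estimate through a chain of balls; it applies B\l ocki's inequality once, with the weight $|v|^{n-1}$ for $v(z)=|z|^2-\rho^2$, which peels off all $n-1$ factors in a single step and yields the explicit constant $(n-1)!\,\|u\|^{n-1}/(\rho^2-r^2)^{n-1}$. Your iterated CLN argument is more elementary and equally valid, at the price of less explicit constants from the cutoff functions hidden in each application. For the second half, the paper bounds $\int dd^cu\wedge\beta^{n-1}$ by $|u(z_0)|$ via Jensen's formula for the counting function $N(\rho)=\int_0^\rho n(t)t^{1-2n}\,dt$, using $u<0$ to get $N(R)<-u(0)$ and the monotonicity of $n(t)/t^{2n-2}$ to extract $n(\rho)\le N(R)\rho^{2n-2}/\log(R/\rho)$; your Riesz--Green representation $-u(z_0)\ge\int G_B(z_0,w)\,d\mu(w)\ge\bigl(\inf_{B_{n-1}}G_B(z_0,\cdot)\bigr)\mu(B_{n-1})$ is essentially the same inequality in potential-theoretic rather than integrated-counting-function form (Jensen's formula is the radial incarnation of the Green representation), with the paper's version giving the quantitatively sharper factor $1/\log(R/\rho)$. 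Your handling of the regularity issue by mollifying on shrunken balls and passing to the limit using $u_\vepsilon(z_0)\searrow u(z_0)$ and weak convergence of $\Delta u_\vepsilon$ is sound, and your normalizations ($u\mapsto u/\sup_B|u|$ and scaling in $R$) correctly recover the stated dependence $C=C(n,R/r)$.
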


\begin{proof} Without loss of generality we may assume $z_0 \equiv 0$. Set $\rho:= (r+R)/2$ and $B(\rho)= \{|z-z_0| < \rho\}$. We use the B\l ocki inequality \cite{B} to get 
\[\notag\begin{aligned}
	\int_{B'} (dd^c u)^n 
&\leq		\frac{1}{(\rho^2 -r^2)^{n-1}}\int_{B(\rho)} |v|^{n-1} (dd^c u)^n \\
&\leq 	\frac{(n-1)! \|u\|_{B_\rho}^{n-1}}{(\rho^2 -r^2)^{n-1}} \int_{B(\rho)} dd^c u \wed \beta^{n-1},
\end{aligned}\]
where $v(z) = |z|^2 -\rho^2$ and $\beta := dd^c v = dd^c |z|^2$.
Next, by Jensen's formula:
\[\notag
	u(0) + N(\rho) = \frac{1}{\sigma_{n-1}} \int_{\{|\zeta|=1\}} u(\rho \zeta)  d\sigma(\zeta),
\]
where  $\sigma_{2n-1}$ is the area of the unit sphere, 
\[\notag
	N(\rho) = \int_0^\rho \frac{n(t)}{t^{2n-1}} dt
\]
and 
\[\notag
	n(t) = \frac{1}{\sigma_{n-1}} \int_{\{|z| \leq t\}} \Delta u(z) dV_{2n}(z) = a_n \int_{\{|z| \leq t\}} dd^c u \wed \beta^{n-1}.
\]
Since $n(t)/t^{n-2}$ is increasing, we have
\[\notag
	N(R) \geq \int_{\rho}^R \frac{n(t)}{t^{2n-1}} dt \geq \frac{n(\rho)}{\rho^{2n-2}} \log (R/\rho).
\]
From  $u<0$, it follows that  $N(R) < - u(0)$. 
Hence, 
\[\notag
	\int_{B_\rho} dd^cu \wed \beta^{n-1} \leq  \frac{n(\rho)}{a_n} \leq \frac{N(R)\rho^{n-2}}{\log(R/\rho)}  \leq  \frac{\rho^{2n-2} |u(0)|}{\log(R/\rho)}. 
\]
Combining  the above inequalities we get the desired estimate with the constant 
$$C = \frac{(n-1)!\rho^{2n-2}}{(\rho^2-r^2)^{n-1} \log (R/\rho)}.$$ 
If $R= 3r$, then $C$ is also independent of $r$.
\end{proof}

We are ready to prove Lemma~\ref{lem:ko94}. We shall reformulate it as in \cite[Lemma~2]{ko94} and follow the proof given there. 

\begin{lem} Denote for $\rho \geq 0$,
$
	B_\rho= \{|z-z_0|< e^\rho R_0\}.
$
Given $z_0 \in \Omega$ and two numbers $M>1$, $R_0>0$ such that
$
	B_M  \subset \subset\Omega,
$
and given $v\in PSH(\Omega)$ such that $-1<v<0$, denote by $E$  the set 
\[\notag
	E=E(\delta) = \{z \in B_0 : (1-\delta)v \leq \sup_{B_0} v\},
\]
where $\delta \in (0,1)$. Then, there exists $C_0$ depending only on $n$ such that
\[\notag
	cap(E, B_2) \leq \frac{C_0}{M\delta}.
\]
\end{lem}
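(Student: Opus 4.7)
The plan is to reduce the capacity estimate to a pointwise bound on the relative extremal function of $E$ at $z_0$ via the preceding Alexander--Taylor lemma (Lemma~\ref{lem:AT}), and then to obtain this pointwise bound using a plurisubharmonic competitor built from $v$ together with the Hadamard three-circle convexity of the spherical supremum.

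More precisely, let $h$ denote the (regularised) relative extremal function of $E$ with respect to $B_2$, so that $(dd^c h)^n$ is a nonnegative measure supported on $\overline{E} \subset B_0$ and $-1 \leq h \leq 0$. Applying Lemma~\ref{lem:AT} to $h$ on the concentric balls $B_0 \subset B(z_0, 3R_0) \subset B_2$ (ratio $3$, with constant depending only on $n$ since $\sup |h|^{n-1} \leq 1$) gives
$cap(E, B_2) = \int_{B_0}(dd^c h)^n \leq C(n)\,|h(z_0)|$.
The task thus reduces to proving $|h(z_0)| \leq C_0/(M\delta)$.

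Next I would exploit the defining condition of $E$. On $E$ the inequality $(1-\delta)v \leq s$ (with $s := \sup_{B_0} v$) rearranges to $v - s \leq -\eta$ where $\eta := |s|\delta/(1-\delta)$, so the plurisubharmonic function $\phi := (v - s)/\eta$ satisfies $\phi \leq -1$ on $E$. Hadamard's three-circle theorem tells us that $\sigma(\rho) := \sup_{B_\rho} v$ is convex and increasing on $[0, M]$ with $\sigma(0) = s$ and $\sigma(M) \leq 0$; the chord estimate $\sigma(\rho) - s \leq \rho|s|/M$ then yields $\phi \leq \rho(1-\delta)/(M\delta) \leq \rho/(M\delta)$ on $B_\rho$. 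In particular, on $B_2$ one has $\phi \leq 2/(M\delta)$. Setting $C^* := 2/(M\delta)$, the truncation $w := \max(\phi - C^*, -1)$ is PSH on $B_2$, bounded above by $0$ there, and equal to $-1$ on $E$; hence $w$ is a competitor for $h$, and $w \leq h$ on $B_2$.

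The main obstacle will be extracting a useful lower bound on $h(z_0)$ from this competitor. The direct construction only shows $w(z_0) \leq -C^* = -2/(M\delta)$ (since $\phi(z_0) \leq 0$), which yields $h(z_0) \geq w(z_0)$ but allows $w(z_0)$ to be as low as $-1$ when $v(z_0)$ is far from $s$. Following \cite{ko94}, this is resolved by applying Lemma~\ref{lem:AT} also to $v$ at the intermediate scales $B_k \subset B_M$ for $0 \leq k \leq M$, combined with the monotonicity of the relative extremal function under enlargement of the container. The factor $1/M$ ultimately originates from the logarithmic barrier given by the relative extremal function of $B_0$ in $B_M$ (which is comparable to $-1 + k/M$ on $\partial B_k$), while $1/\delta$ reflects the depth $\eta$ of $v$ below $s$ on $E$. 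Matching these two scales via the three-circle bound yields the final estimate with a constant $C_0$ depending only on $n$.
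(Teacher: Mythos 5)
Your reduction is set up at the wrong point, and you have correctly diagnosed --- but not repaired --- the resulting gap. After applying Lemma~\ref{lem:AT} to the relative extremal function $h=u_{E,B_2}$ on balls centered at $z_0$, you need an upper bound on $|h(z_0)|$ of order $1/(M\delta)$; but your competitor $w=\max(\phi-C^*,-1)\le h$ only controls $h$ from below by $w$, and $w(z_0)$ can equal $-1$ whenever $v(z_0)$ lies far below $s=\sup_{B_0}v$, which is the generic situation since the supremum of a psh function over a ball is attained only near the boundary sphere. Your closing paragraph, which appeals to ``Lemma~\ref{lem:AT} applied to $v$ at intermediate scales'' and to where the factors $1/M$ and $1/\delta$ ``originate'', is a heuristic narrative rather than an argument; it does not produce the missing pointwise bound.

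The fix, which is what the paper does, is to change the evaluation point: pick $z_1\in\partial B_0$ where $v$ (nearly) attains $a_0=\sup_{B_0}v$. The comparison $\frac{v-a}{a-a_0/(1-\delta)}\le h$ on $B_2$ (with $a=\sup_{B_2}v$), combined with the logarithmic-convexity bound $a\le a_0(1-2/M)$ --- the same three-circle input you already invoke --- gives $|h(z_1)|\le \frac{2(1-\delta)}{(M-2)\delta+2}\le \frac{2}{M\delta}$, with no loss from the unknown value of $v$ at the evaluation point because $v(z_1)=a_0$ by construction. Since $E\subset B_0\subset\{|z-z_1|<2R_0\}$ and $\{|z-z_1|<6R_0\}\subset B_2$, Lemma~\ref{lem:AT} applies to the concentric pair centered at $z_1$ with ratio $3$ and yields $cap(E,B_2)\le C(n)\,|h(z_1)|$. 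With this substitution (evaluate at $z_1$, not at $z_0$) your argument closes; as written, it does not.
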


\begin{proof} From the logarithmic convexity of the function $r \mapsto \sup_{|z-z_0|<r} v(z)$ it follows that for $z \in B_M \setminus B_0$ and $a_0:= \sup_{B_0} v$ we have
\[\notag
	v(z) \leq a_0 \left(1 - \frac{1}{M} \log\frac{|z-z_0|}{R_0}\right).
\]
Hence, 
\[\notag
	a := \sup_{B_2} v \leq a_0\left(1 - \frac{2}{M}\right).
\]
Let $u = u_{E,B_2}$ the relative extremal function of $E$ with respect to $B_2$. One has
\[\notag
	\frac{v-a}{a - a_0/(1-\delta)} \leq u.
\]
So, for some $z_1 \in \d B_0$ we have
\[\notag
	u(z_1) \geq \frac{a_0 - a}{a - a_0/(1-\delta)} \geq \frac{2(\delta -1)}{(M-2)\delta + 2}.
\]
Note that $E \subset \{|z-z_1| < 2R_0\} \subset |z-z_1| < 6R_0 \subset B_2$. Therefore, Lemma~\ref{lem:AT} gives
\[\notag
	cap(E,B_2) = \int_{\{|z-z_1| < 6R_0\}} (dd^c u)^n \leq  C_0 \|u\|_{B_2}^{n-1} |u(z_1)| \leq  \frac{C_0}{M \delta}.
\]
This is the desired inequality.
\end{proof}

\end{document}